\journal{Statistics \& Probability Letters}
\begin{document}

\begin{frontmatter}



\title{An explicit representation of Verblunsky coefficients}


\author[label1]{N.~H.~Bingham}
\author[label2]{Akihiko Inoue\corref{cor1}}
\ead{inoue100@hiroshima-u.ac.jp}
\author[label3]{Yukio Kasahara}

\cortext[cor1]{Corresponding author}

\address[label1]{Department of Mathematics, Imperial College London,
London SW7 2AZ, UK}

\address[label2]{Department of Mathematics, 
Hiroshima University, 
Higashi-Hiroshima 739-8526, Japan}

\address[label3]{Department of Mathematics,
Hokkaido University,
Sapporo 060-0810, Japan}

\begin{abstract}
We prove a representation of 
the partial autocorrelation function (PACF) of a stationary process, 
or of the Verblunsky coefficients of its normalized spectral measure, 
in terms of the Fourier coefficients of the phase function. 
It is not of fractional form, whence simpler than the existing one obtained by the second author. 
We apply it to 
show a general estimate on the Verblunsky coefficients for short-memory processes as well as 
the precise asymptotic behaviour, with remainder term, of those 
for FARIMA processes.
\end{abstract}

\begin{keyword}
Verblunsky coefficients \sep
Partial autocorrelation functions \sep
Phase functions \sep
FARIMA processes \sep
Long memory
\MSC 62M10 \sep
42C05 \sep
60G10
\end{keyword}

\end{frontmatter}


\newtheorem{thm}{Theorem}[section]
\newtheorem{prop}[thm]{Proposition}
\newtheorem{lem}[thm]{Lemma}

\newdefinition{remark}{Remark}
\newdefinition{defn}{Definition}
\newdefinition{ex}{Example}

\newproof{proof}{Proof}

\numberwithin{equation}{section}
\renewcommand{\theenumi}{\roman{enumi}}
\renewcommand{\labelenumi}{(\theenumi)}




\section{Introduction}\label{sec:1}

Let $\{X_n: n\in\mathbb{Z}\}$ be a real, zero-mean, weakly stationary 
process, defined on a probability space $(\Omega,\mathcal{F},P)$, 
with spectral measure not of finite support, which we 
shall simply call a \textit{stationary process}\/. 
Here the spectral measure is the finite measure $\mu$ on $(-\pi,\pi]$ 
in the spectral representation 
$\gamma(n)=\int_{(\pi,\pi]}e^{in\theta}\mu(d\theta)$ of the 
\textit{autocovariance function}\/ $\gamma(n):=E[X_nX_0]$, $n\in\mathbb{Z}$. 
For $\{X_n\}$, we have another sequence 
$\{\alpha(n)\}_{n=1}^{\infty}$ 
called the \textit{partial autocorrelation function}\/ (PACF); 
see (\ref{eq:2.1}) below for the definition. 
In the theory of orthogonal polynomials on the unit circle (OPUC), however, 
the PACF $\{\alpha(n)\}_{n=1}^{\infty}$ appears as the sequence of 
\textit{Verblunsky coefficients}\/ of the normalized spectral measure 
$\tilde{\mu}:=(\mu(-\pi,\pi])^{-1}\mu$. 
Notice that $(-\pi,\pi]$ can be identified with the unit circle 
$\mathbb{T}:=\{z\in\mathbb{C}\ \vert\ 
\vert z\vert=1\}$ by the map $\theta\mapsto e^{i\theta}$, whence 
$\mu$ or $\tilde{\mu}$ with a measure on $\mathbb{T}$. 
For a survey of OPUC, see Simon (2005a, 2005b, 2005c, 2011).

The Verblunsky coefficients $\{\alpha(n)\}_{n=1}^{\infty}$ 
give an unrestricted parametrization of the normalized spectral measure 
$\tilde{\mu}$ of $\{X_n\}$, 
in that the only inequalities 
restricting the $\alpha(n)$ are $\alpha(n) \in [-1,1]$, 
or $\alpha(n) \in (-1,1)$ in the non-degenerate case relevant here. 
This result is due to Barndorff--Nielsen and Schou (1973), 
Ramsey (1974) in the time-series context. 
However, in OPUC, the result 
dates back to Verblunsky (1935, 1936). 
See, e.g., Simon (2005b, 2005c) and Bingham (2011) for background.

The aim of this paper is to prove an explicit representation of 
the Verblunsky coefficients $\{\alpha(n)\}_{n=1}^{\infty}$ in terms of 
another sequence $\{\beta_n\}_{n=0}^{\infty}$ 
defined by
\begin{equation}
\beta_n:=\sum\nolimits_{v=0}^{\infty} c_v a_{v+n}, \qquad n=0,1,\dots, 
\label{eq:1.1}
\end{equation}
where $\{c_n\}_{n=0}^{\infty}$ and 
$\{a_n\}_{n=0}^{\infty}$ are the MA and AR coefficients 
of $\{X_n\}$, respectively, defined by (\ref{eq:2.3}) below. 
See Inoue and Kasahara (2004 Section 3, 2006 (2.23)). 
We notice that $\beta_n$ correspond to the 
Fourier coefficients of the \textit{phase function\/}
of the process (see Remark \ref{rem:1} in \S 2). 
The proof of the representation of $\{\alpha(n)\}$ is based on the result of 
Inoue and Kasahara (2006) on the explicit 
representation of finite predictor coefficients as well as the Levinson 
(or Levinson--Durbin) algorithm or the Szeg\"o recursion. 
The algorithm is due to Szeg\"o (1939), Levinson (1947), and 
Durbin (1960); 
for a textbook 
account, see Pourahmadi (2001, Section 7.2).

We notice that, in Inoue (2008), the second author already proved a 
representation of $\{\alpha(n)\}_{n=1}^{\infty}$ in terms of $\{\beta_n\}_{n=0}^{\infty}$. 
However, the representation of $\{\alpha(n)\}_{n=1}^{\infty}$ 
in the present paper is much simpler than that in Inoue (2008), in that 
the latter is of fractional form while the former not. 
We apply the result to show a general estimate of $\alpha(n)$ for short-memory processes 
as well as the precise asymptotic behaviour, with remainder, of $\alpha(n)$ 
for FARIMA processes. 
The FARIMA model is a popular parametric model with long memory, and 
was introduced independently by Granger and Joyeux (1980) and Hosking (1981). 
See Brockwell and Davis (1991, Section 9) for textbook treatment. 
The long memory of the FARIMA model comes from the singularity at zero of its spectral density.

In \S 2, we state the main result, i.e., the 
representation of the Verblunsky coefficients. Its proof is given in \S 3. 
In \S 4, we apply the main result to both short-memory and FARIMA processes.

\section{Main result}\label{sec:2}

Let $H$ be the real Hilbert space spanned by $\{X_k:k\in\mathbb{Z}\}$ 
in $L^2(\Omega,\mathcal{F},P)$, which has inner product 
$(Y_1,Y_2):=E[Y_1Y_2]$ and norm $\Vert Y\Vert:=(Y,Y)^{1/2}$. 
For an interval $I\subset \mathbb{Z}$, we write $H_I$ for 
the closed subspace of $H$ spanned by $\{X_k: k\in I\}$ 
and $H_{I}^{\bot}$ for the orthogonal complement of $H_I$ in $H$. 
Let $P_I$ and $P^{\bot}_I$ be the orthogonal projection operators of $H$ 
onto $H_I$ and $H_{I}^{\bot}$, respectively. 
The projection $P_IY$ stands for 
the best linear predictor of $Y$ based on the observations 
$\{X_k: k\in I\}$, and $P_I^{\bot}Y$ for its prediction error. 

The PACF $\{\alpha(n)\}_{n=1}^{\infty}$ of 
$\{X_n\}$ is defined by
\begin{equation}
\alpha(1):=\frac{\gamma(1)}{\gamma(0)},\qquad 
\alpha(n)
:=\frac{(P^{\bot}_{[1,n-1]}X_n, P^{\bot}_{[1,n-1]}X_0)}{\Vert P^{\bot}_{[1,n-1]}X_n\Vert^2},
\qquad n=2,3,\dots
\label{eq:2.1}
\end{equation}
(cf.\ Brockwell and Davis (1991, Sections 3.4 and 5.2)).
As stated in \S 1, The PACF $\{\alpha(n)\}_{n=1}^{\infty}$ coincides with the 
Verblunsky coefficients of the normalized spectral measure $\tilde{\mu}$. 
In what follows, we also call $\{\alpha(n)\}_{n=1}^{\infty}$ the Verblunsky coefficients of 
$\{X_n\}$.

Our main result, i.e., Theorem \ref{thm:2.1} below, is an explicit representation of 
$\{\alpha(n)\}_{n=1}^{\infty}$. To state it, we need some notation. 
A stationary process $\{X_n\}$ is said to be 
\textit{purely nondeterministic}\/ (PND) if 
$\cap_{n=-\infty}^{\infty}H_{(-\infty,n]}=\{0\}
$, 
or, equivalently, there exists a positive even and 
integrable function $\Delta$ on $(-\pi,\pi]$ such that 
$\int_{-\pi}^{\pi}\vert\log \Delta(\theta)\vert d\theta<\infty$ and 
$
\gamma_n=\int_{-\pi}^{\pi}e^{in\theta}\Delta(\theta)d\theta
$ 
for $n\in\mathbb{Z}$; see Brockwell and Davis (1991, Section 5.7), 
Rozanov (1967, Chapter II) and 
Grenander and Szeg\"o (1958, Chapter 10). 
We call $\Delta$ the \textit{spectral density}\/ of $\{X_n\}$. 
Using $\Delta$, we define the 
\textit{Szeg\"o function}\/ $h$ by
\begin{equation}
h(z):=\sqrt{2\pi}\mbox{exp}\left\{\frac{1}{4\pi}\int_{-\pi}^{\pi}
\frac{e^{i\theta}+z}{e^{i\theta}-z}
\log \Delta(\theta)d\theta\right\}, \qquad z\in\mathbb C,\ \vert z\vert<1.
\label{eq:2.2}
\end{equation}
The function $h(z)$ is 
an outer function in the Hardy space $H^{2}$ 
of class 2 over the unit disk $\vert z\vert<1$. 
Using $h$, we define the MA coefficients $c_n$ and the AR coefficients $a_n$, 
respectively, 
by
\begin{equation}
h(z)=\sum\nolimits_{n=0}^{\infty}c_nz^n, \qquad
-\frac{1}{h(z)}=\sum\nolimits_{n=0}^{\infty}a_nz^n, \qquad \vert z\vert<1;
\label{eq:2.3}
\end{equation}
see Inoue (2000, Section 4) and Inoue and Kasahara (2006, Section 2.2) for background. 
Both $\{c_n\}$ and $\{a_n\}$ are 
real sequences, and $\{c_n\}$ is in $l^2$.

We write $\mathcal{R}_0$ for the class of \textit{slowly varying functions}\/ at infinity:
the class of positive, measurable $\ell$, defined on some 
neighborhood $[A,\infty)$ of infinity, such that
$\lim_{x\to\infty}\ell(\lambda x)/\ell(x)=1$ for all $\lambda>0$; 
see Bingham et al.\ (1989, Chapter 1) for background. 
Among several possible choices of assumption on $\{X_n\}$, as in Inoue and Kasahara (2006), 
we consider
\begin{itemize}
\item[({\bf A1})]$\{X_n\}$ is PND, and 
both $\sum\nolimits_{n=0}^\infty\vert a_n\vert<\infty$ and 
$\sum\nolimits_{n=0}^\infty\vert c_n\vert<\infty$ hold
\end{itemize}
as a standard one for processes with short memory, and 
\begin{itemize}
\item[({\bf A2})]$\{X_n\}$ is PND, and, for some 
$d\in (0,1/2)$ and 
$\ell\in\mathcal{R}_0$, $\{c_n\}$ and $\{a_n\}$ satisfy, respectively,
\[
c_n\sim n^{-(1-d)}\ell(n),
\qquad 
a_n\sim n^{-(1+d)}\frac{1}{\ell(n)}\cdot\frac{d\sin(\pi d)}{\pi},
\qquad n\to\infty
\]
\end{itemize}
as a standard one for processes with long memory. 
Here $p_n\sim q_n$ as $n\to\infty$ means $\lim_{n\to\infty}p_n/q_n=1$.

Recall $\beta_n$ from (\ref{eq:1.1}). 
Notice that the sum in (\ref{eq:1.1}) converges absolutely 
under either (A1) or (A2). 
For $n\in\mathbb{N}\cup\{0\}$, we define 
$\alpha_1(n):=\beta_n$ 
and, for $k=3,5,7,\dots$,
\[
\alpha_k(n)
:=\sum\nolimits_{v_1=0}^\infty \cdots\sum\nolimits_{v_{k-1}=0}^\infty
\beta_{n+v_1}\beta_{n+1+v_1+v_2}
\cdots\beta_{n+1+v_{k-2}+v_{k-1}}\beta_{n+1+v_{k-1}}.
\]
As in the case of $d_k(n,j)$ in Inoue and Kasahara (2006, Section 2.3), the sums 
converge absolutely. 
We write $\sum^{\infty-}$ to indicate that the sum does 
not necessarily converge absolutely, i.e., 
$\sum_{k=m}^{\infty-}:=\lim_{M\to\infty}\sum_{k=m}^{M}$. 

Here is the main result of this paper.

\begin{thm}\label{thm:2.1}
We assume either ${\rm (A1)}$ or ${\rm (A2)}$. 
Then 
$\alpha(n)
=\sum\nolimits_{k=1}^{\infty-}\alpha_{2k-1}(n)$ for $n=2,3,\dots$.
\end{thm}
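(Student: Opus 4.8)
### Proof Plan

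The plan is to combine the Levinson--Durbin (Szeg\"o) recursion with the known explicit representation of finite predictor coefficients from Inoue and Kasahara (2006). Recall that the Szeg\"o recursion expresses $\alpha(n)$ in terms of the finite predictor coefficients $\phi_{n-1,j}$ of order $n-1$: writing $X_n - P_{[1,n-1]}X_n = X_n - \sum_{j=1}^{n-1}\phi_{n-1,j}X_{n-j}$, one has the standard identity
\begin{equation}
\alpha(n) = \frac{\gamma(n) - \sum_{j=1}^{n-1}\phi_{n-1,j}\,\gamma(n-j)}{\Vert P^\bot_{[1,n-1]}X_0\Vert^2}, \qquad n\ge 2,
\label{eq:plan-1}
\end{equation}
and by stationarity and time-reversal symmetry of the PACF the denominator is the order-$(n-1)$ prediction error variance. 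The first step is therefore to insert into \eqref{eq:plan-1} the explicit formula for $\phi_{n-1,j}$ from Inoue and Kasahara (2006), which is itself expressed through the coefficients $\{a_n\}$, $\{c_n\}$ and the quantities $d_k(n,j)$ built from $\beta_n$; and to use the companion explicit formula for the prediction error variance (equivalently $\sum_{j\ge 0} c_j^2$-type expressions, or the ratios of successive error variances, which telescope).

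Next I would reorganize the resulting expression. The numerator of \eqref{eq:plan-1}, after substituting the MA/AR expansions of $\gamma$ and of $\phi_{n-1,j}$, becomes a sum over products of $\beta$'s; the key combinatorial observation is that all terms with an even number of $\beta$-factors cancel (reflecting the fact that $\alpha(n)$ is the odd part of the phase-function expansion), leaving exactly the sums $\alpha_{2k-1}(n)$ defined before the theorem. Concretely, I expect the $k=1$ term $\alpha_1(n)=\beta_n$ to come from the leading contribution $\gamma(n)/\gamma(0)$-type term after normalization, and the higher $\alpha_{2k-1}(n)$ to be produced by iterating the recursion — each application of the Szeg\"o step contributes two more nested $\beta$ summations with the shift pattern $v_1,\dots,v_{k-1}$ and the index offsets $n, n+1+v_1+v_2, \dots$ displayed in the definition of $\alpha_k(n)$. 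The bookkeeping of these offsets (the asymmetry between the first index $n+v_1$ and the later ones $n+1+v_{i-1}+v_i$) is what must be matched term by term against the Inoue--Kasahara formula for $d_k(n,j)$.

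The absolute convergence of each inner sum $\alpha_{2k-1}(n)$ under (A1) or (A2) is inherited from the corresponding statement about $d_k(n,j)$ in Inoue and Kasahara (2006), so that point needs only a citation. The one genuine subtlety is the outer sum over $k$: it need not converge absolutely, which is exactly why the notation $\sum^{\infty-}$ is introduced. Here I would argue that the partial sums $\sum_{k=1}^{M}\alpha_{2k-1}(n)$ are precisely the quantities produced after $M$ iterations of the Szeg\"o recursion applied to a truncated/approximating process (or, equivalently, are the Ces\`aro-type partial sums arising from the finite-order predictor identities), and that these converge to $\alpha(n)$ by the known convergence of finite predictor coefficients to the infinite-order ones under (A1)/(A2). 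The main obstacle, then, is not any single estimate but the careful identification of the iterated Szeg\"o recursion with the nested-sum structure of $\alpha_{2k-1}(n)$: one must verify that the recursion's cross-terms, when unwound, reproduce exactly the index pattern in the definition of $\alpha_k(n)$ with no spurious terms and no missing ones, and that the truncation at level $M$ corresponds to a genuine partial sum rather than something only asymptotically equal to one.
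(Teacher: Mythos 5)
Your plan has a genuine gap at its center. Starting from the ratio $\alpha(n)=\bigl(\gamma(n)-\sum_{j=1}^{n-1}\phi_{n-1,j}\gamma(n-j)\bigr)/\Vert P^{\bot}_{[1,n-1]}X_0\Vert^2$ and substituting the Inoue--Kasahara (2006) formulas for the predictor coefficients and the error variance is essentially the route of Inoue (2008), and it produces a representation of \emph{fractional} form; the entire content of the present theorem is that this fraction collapses to the plain series $\sum_{k=1}^{\infty-}\alpha_{2k-1}(n)$. You assert this collapse (``all terms with an even number of $\beta$-factors cancel'') but give no argument, and there is no simple parity cancellation available: verifying it would require exactly the kind of new identity that constitutes the proof. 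The paper avoids the ratio altogether: it uses the Szeg\"o recursion in its difference form $\phi_{n,j}-\phi_{n+1,j}=\phi_{n,n+1-j}\,\alpha(n+1)$, proves by induction a difference equation $d_{2k}(n,j)-d_{2k}(n+1,j)=\sum_{l=1}^{k}\alpha_{2k-2l+1}(n)\,d_{2l-1}(n,j)$ for the kernels built from $\beta$, transfers it to the blocks $b_k(n,j)$ appearing in the explicit predictor formula, and then shows $\phi_{n,j}-\phi_{n+1,j}=\bigl\{\sum_{k=1}^{\infty-}\alpha_{2k-1}(n+1)\bigr\}\phi_{n,n+1-j}$, from which $\alpha(n+1)$ is read off because the vector $(\phi_{n,1},\dots,\phi_{n,n})$ is nonzero. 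Nothing in your outline supplies that inductive identity or a substitute for it.

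A second, smaller gap concerns the conditionally convergent outer sum: your claim that the partial sums $\sum_{k=1}^{M}\alpha_{2k-1}(n)$ are ``precisely the quantities produced after $M$ iterations of the Szeg\"o recursion applied to a truncated process'' is not established and is not obviously true. What can be cited from Inoue--Kasahara (2006) is the absolute convergence of each $\alpha_{2k-1}(n)$ and the $\sum^{\infty-}$ convergence of the predictor series; beyond that one must still justify the rearrangement (interchanging $\sum_k\sum_{l\le k}$ into $\sum_l\sum_{k\ge l}$, as the paper does) that makes the series $\sum_{k=1}^{\infty-}\alpha_{2k-1}(n)$ appear as the scalar in the Szeg\"o recursion. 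As it stands, your proposal identifies the right ingredients (the explicit predictor formula and the Levinson--Durbin/Szeg\"o recursion) but leaves the theorem's actual content unproved.
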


The proof of Theorem \ref{thm:2.1} is given in Section \ref{sec:3}.

\begin{remark}\label{rem:1}
We assume $\{a_n\}\in\ell^2$. 
As usual, we identify $h$ with its boundary-value function 
$h(e^{i\theta})=\lim_{r\uparrow 1}h(re^{i\theta})$. 
Then, since $h(e^{i\theta})=\sum_{k=0}^{\infty}c_ke^{ik\theta}$ and 
$1/h(e^{i\theta})=-\sum_{k=0}^{\infty}e^{ik\theta}a_ke^{ik\theta}$, 
Parseval's identity yields
\[
\int_{-\pi}^{\pi}e^{-in\theta}\{\overline{h(e^{i\theta})}/h(e^{i\theta})\}
\frac{d\theta}{2\pi}
=-\sum\nolimits_{k=0}^{\infty}c_ka_{k+n}=-\beta_n,\qquad n=0,1,\dots.
\]
Here notice that, in our set-up, $\{c_n\}$ is real. 
Thus $\beta_n$ (or, more precisely, $-\beta_n$) is the $n$-th Fourier 
coefficient of $\bar h/h$. The function $\bar h/h$ is called the \textit{phase 
function}\/ of the process. See Peller (2003, p.\ 405); 
see also Dym and McKean (1976) for its continuous-time analogue.
\end{remark}


\section{Proof of Theorem \ref{thm:2.1}}\label{sec:3}

In this section, we assume either (A1) or (A2). 
For $n\in\mathbb{N}$, 
we can express $P_{[-n,-1]}X_{0}$ uniquely 
in the form
\[
P_{[-n,-1]}X_{0}=\sum\nolimits_{j=1}^n\phi_{n,j}X_{-j}.
\]
We call $\phi_{n,j}$ the \textit{finite predictor coefficients}\/. 
The proof of Theorem \ref{thm:2.1} is based on the explicit representation of $\phi_{n,j}$, 
i.e., (\ref{eq:3.5}) below, and 
the following Szeg\"o recursion (or the Levinson--Durbin algorithm):
\begin{equation}
\phi_{n,j}-\phi_{n+1,j}=\phi_{n,n+1-j}\alpha(n+1),\qquad j=1,\dots,n.
\label{eq:3.1}
\end{equation}
See, e.g., (5.2.4) in Brockwell and Davis (1991) for the latter.

As in Inoue and Kasahara (2006, Section 2.3), we define, for $n,j\in\mathbb{N}\cup\{0\}$,
\[
d_0(n,j)=\delta_{j0},\qquad
d_1(n,j)=\beta_{n+j},\qquad
d_2(n,j)=\sum\nolimits_{v_1=0}^{\infty}\beta_{n+j+v_1}\beta_{n+v_1},\quad 
\]
and
\[
d_k(n,j)
=\sum\nolimits_{v_{1}=0}^{\infty}\cdots \sum\nolimits_{v_{k-1}=0}^{\infty}
\beta_{n+j+v_{k-1}}\beta_{n+v_{k-1}+v_{k-2}}
\cdots\beta_{n+v_{2}+v_{1}}\beta_{n+v_{1}},\quad k\ge 3,
\]
the sums converging absolutely. 
These satisfy the following 
recursion: 
for $n,j\in\mathbb{N}\cup\{0\}$,
\begin{equation}
d_0(n,j)=\delta_{j0},\qquad 
d_{k+1}(n,j)=\sum\nolimits_{v=0}^{\infty}\beta_{n+j+v}d_k(n,v),\qquad k\ge 0.
\label{eq:3.2}
\end{equation}
From the definition of $\alpha_k(n)$ above, we also have
\begin{equation}
\alpha_{2k+1}(n)=\sum\nolimits_{v=0}^{\infty}\beta_{n+v}d_{2k}(n+1,v), \qquad 
n,k\in\mathbb{N}\cup\{0\}.
\label{eq:3.3}
\end{equation}

The next proposition is the key to the proof of Theorem \ref{thm:2.1}.

\begin{prop}\label{prop:3.1}
For $n,j\in\mathbb{N}\cup\{0\}$ and $k\in\mathbb{N}$, we have
\begin{equation}
d_{2k}(n,j)-d_{2k}(n+1,j)
=\sum\nolimits_{l=1}^{k}\alpha_{2k-2l+1}(n)d_{2l-1}(n,j).
\label{eq:3.4}
\end{equation}
\end{prop}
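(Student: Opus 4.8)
The plan is to prove (\ref{eq:3.4}) by induction on $k$, using nothing beyond the recursion (\ref{eq:3.2}), the identity (\ref{eq:3.3}) defining $\alpha_{2k+1}(n)$, and repeated reindexing of the (absolutely convergent) multiple series. For the base case $k=1$, I would use (\ref{eq:3.2}) to write $d_2(n,j)=\sum_{v\ge 0}\beta_{n+j+v}\beta_{n+v}$ and likewise for $d_2(n+1,j)$; shifting the dummy index by $1$ in the latter cancels it against the $v\ge 1$ part of the former, leaving $d_2(n,j)-d_2(n+1,j)=\beta_n\beta_{n+j}=\alpha_1(n)d_1(n,j)$, which is (\ref{eq:3.4}) for $k=1$.

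For the inductive step, suppose (\ref{eq:3.4}) holds with $k$ replaced by $k-1$, where $k\ge 2$. The idea is to apply (\ref{eq:3.2}) twice so as to strip two layers off $d_{2k}$, collecting the ``boundary'' summands (those with dummy index $0$) separately. A first application to $d_{2k}(n,j)=\sum_{v\ge 0}\beta_{n+j+v}d_{2k-1}(n,v)$, followed by a shift of the surviving sum and comparison with the corresponding expansion of $d_{2k}(n+1,j)$, gives
\[
d_{2k}(n,j)-d_{2k}(n+1,j)=\beta_{n+j}\,d_{2k-1}(n,0)+\sum_{v\ge 0}\beta_{(n+1)+j+v}\bigl[d_{2k-1}(n,v+1)-d_{2k-1}(n+1,v)\bigr].
\]
A second application of (\ref{eq:3.2}), now to each of $d_{2k-1}(n,v+1)$ and $d_{2k-1}(n+1,v)$, yields expansions whose $\beta$-indices coincide, so their difference is $\sum_{w\ge 0}\beta_{n+v+1+w}\bigl[d_{2k-2}(n,w)-d_{2k-2}(n+1,w)\bigr]$; the induction hypothesis applies here, and resumming once more via (\ref{eq:3.2}) converts it into $\sum_{l=1}^{k-1}\alpha_{2k-2l-1}(n)\,d_{2l}(n,v+1)$. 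Substituting back, reindexing the $v$-sum, and using $\sum_{w\ge 1}\beta_{n+j+w}d_{2l}(n,w)=d_{2l+1}(n,j)-\beta_{n+j}d_{2l}(n,0)$, I arrive at
\[
d_{2k}(n,j)-d_{2k}(n+1,j)=\sum_{l=2}^{k}\alpha_{2k-2l+1}(n)\,d_{2l-1}(n,j)+\beta_{n+j}\Bigl[d_{2k-1}(n,0)-\sum_{l=1}^{k-1}\alpha_{2k-2l-1}(n)\,d_{2l}(n,0)\Bigr].
\]

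It remains to identify the bracketed quantity with $\alpha_{2k-1}(n)$, since $d_1(n,j)=\beta_{n+j}$ then makes that term the missing $l=1$ summand of (\ref{eq:3.4}). This amounts to the auxiliary identity $d_{2k-1}(n,0)=\alpha_{2k-1}(n)+\sum_{l=1}^{k-1}\alpha_{2k-2l-1}(n)\,d_{2l}(n,0)$, which is itself a consequence of the induction hypothesis: by (\ref{eq:3.3}) together with the special case $d_{2k-1}(n,0)=\sum_{v\ge 0}\beta_{n+v}d_{2k-2}(n,v)$ of (\ref{eq:3.2}), one has $d_{2k-1}(n,0)-\alpha_{2k-1}(n)=\sum_{v\ge 0}\beta_{n+v}\bigl[d_{2k-2}(n,v)-d_{2k-2}(n+1,v)\bigr]$, into which (\ref{eq:3.4}) for $k-1$ may be inserted; the identity $d_{2l}(n,0)=\sum_{v\ge 0}\beta_{n+v}d_{2l-1}(n,v)$ then gives the stated right-hand side.

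I expect the main obstacle to be bookkeeping rather than anything substantive: one has to keep track of the index shifts and the relabelling $l\mapsto l\pm 1$ so that the partially resummed series match the right-hand side of (\ref{eq:3.4}), and one has to recognise that the boundary terms generated along the way reassemble into exactly the auxiliary identity above. No analytic difficulty arises beyond the absolute convergence of the series defining $d_k(n,j)$ and $\alpha_{2k+1}(n)$ under (A1) or (A2), which has already been noted and which legitimises every rearrangement.
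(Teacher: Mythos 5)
Your argument is correct: the base case, the one-layer peeling identity, the application of the induction hypothesis inside the $v$-sum, the resummation via (\ref{eq:3.2}), and the auxiliary identity $d_{2k-1}(n,0)=\alpha_{2k-1}(n)+\sum_{l=1}^{k-1}\alpha_{2k-2l-1}(n)\,d_{2l}(n,0)$ (proved from (\ref{eq:3.3}) plus a second use of the induction hypothesis) all check out, and the rearrangements are legitimate by the absolute convergence already noted in the paper. The paper proves the same statement by the same overall scheme (induction on $k$, using (\ref{eq:3.2}), (\ref{eq:3.3}) and Fubini--Tonelli), but with a different decomposition: it strips \emph{two} layers at once, writing $d_{2k+2}(n,j)=\sum_{v_2\ge 0}\bigl[\sum_{v_1\ge n}\beta_{v_1+v_2}\beta_{v_1+j}\bigr]d_{2k}(n,v_2)$, so that the difference splits as $\mathrm{I}+\mathrm{II}$ where $\mathrm{I}$ absorbs the induction hypothesis in one stroke and the single boundary term $\mathrm{II}=\sum_{v}\beta_{n+v}\beta_{n+j}d_{2k}(n+1,v)$ is recognized \emph{immediately} as $\alpha_{2k+1}(n)d_1(n,j)$ via (\ref{eq:3.3}). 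Your one-layer-at-a-time peeling instead produces boundary terms at $j$-index $0$ and therefore needs the auxiliary $j=0$ identity, i.e.\ a second invocation of the induction hypothesis, to reassemble them into the missing $l=1$ summand. The paper's route buys brevity and a cleaner emergence of the new coefficient $\alpha_{2k+1}(n)$; yours is more mechanical and makes transparent where each boundary term comes from, at the cost of extra bookkeeping and the extra lemma. (Your base case, a termwise cancellation after an index shift, is also a slightly more direct phrasing of the paper's telescoping observation $d_2(n,j)=\sum_{v\ge n}\alpha_1(v)d_1(v,j)$.)
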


\begin{proof}
Let $n,j\in\mathbb{N}\cup\{0\}$. 
We use mathematical induction on $k$. First, 
since $\alpha_1(n)=\beta_n$ and $d_1(n,j)=\beta_{n+j}$, we have 
$d_2(n,j)
=\sum_{v=n}^{\infty}\beta_v\beta_{v+j}
=\sum_{v=n}^{\infty}\alpha_1(v)d_1(v,j)$, which 
implies (\ref{eq:3.4}) with $k=1$. Next, 
we assume that (\ref{eq:3.4}) holds for $k\in\mathbb{N}$. 
Then, by (\ref{eq:3.2}) and the Fubini--Tonelli theorem, we have 
$
d_{2k+2}(n,j)=\sum_{v_2=0}^{\infty}
[\sum\nolimits_{v_1=n}^{\infty}\beta_{v_2+v_1}\beta_{j+v_1}]
d_{2k}(n,v_2)
$, 
whence $d_{2k+2}(n,j)-d_{2k+2}(n+1,j)=\mathrm{I}+\mathrm{II}$, 
where
\[
\mathrm{I}
:=\sum\nolimits_{v_2=0}^{\infty}
\left[\sum\nolimits_{v_1=0}^{\infty}\beta_{n+v_2+v_1}\beta_{n+j+v_1}\right]
\left[d_{2k}(n,v_2)-d_{2k}(n+1,v_2)\right],\qquad 
\mathrm{II}
:=\sum\nolimits_{v_2=0}^{\infty}\beta_{n+v_2}\beta_{n+j}d_{2k}(n+1,v_2).
\]
By (\ref{eq:3.2}), (\ref{eq:3.4}), and the Fubini--Tonelli theorem,
\begin{align*}
\mathrm{I}
&=\sum\nolimits_{v_2=0}^{\infty}
\left[\sum\nolimits_{v_1=0}^{\infty}\beta_{n+v_2+v_1}\beta_{n+j+v_1}\right]
\sum\nolimits_{l=1}^{k}\alpha_{2k-2l+1}(n)d_{2l-1}(n,v_2)\\
&=\sum\nolimits_{l=1}^{k}\alpha_{2k-2l+1}(n)
\sum\nolimits_{v_1=0}^{\infty}\beta_{n+j+v_1}
\sum\nolimits_{v_2=0}^{\infty}\beta_{n+v_1+v_2}d_{2l-1}(n,v_2)\\
&=\sum\nolimits_{l=1}^{k}\alpha_{2k-2l+1}(n)d_{2l+1}(n,j)
=\sum\nolimits_{l=2}^{k+1}\alpha_{2(k+1)-2l+1}(n)d_{2l-1}(n,j),
\end{align*}
while, by (\ref{eq:3.3}), we have 
$
\mathrm{II}
=[\sum\nolimits_{v_2=0}^{\infty}
\beta_{n+v_2}d_{2k}(n+1,v_2)]\beta_{n+j}
=\alpha_{2k+1}(n)d_1(n,j)
$. 
Thus we obtain (\ref{eq:3.4}) with $k$ replaced by $k+1$, as desired. \qed
\end{proof}

For $n\in\mathbb{N}$ and $j=1,\dots,n$, 
Theorem 2.9 in Inoue and Kasahara (2006) asserts the 
representation
\begin{equation}
\phi_{n,j}
=\sum\nolimits_{k=1}^{\infty-}
\left\{b_{2k-1}(n,j)+b_{2k}(n,n+1-j)\right\},
\label{eq:3.5}
\end{equation}
where
\begin{equation}
b_k(n,j)=c_0\sum\nolimits_{u=0}^\infty a_{j+u}d_{k-1}(n+1,u),
\qquad
k=1,2,\dots.
\label{eq:3.6}
\end{equation}
Using Proposition 3.1, we derive two kinds of difference
equations for $b_k(n,j)$.

\begin{prop}\label{prop:3.2}
For $n,k\in\mathbb{N}$ and $j=1,\dots,n$, we have
\begin{eqnarray}
&&b_{2k+1}(n,j)-b_{2k+1}(n+1,j)
=\sum\nolimits_{l=1}^{k}\alpha_{2k-2l+1}(n+1)b_{2l}(n,j),
\label{eq:3.7}\\
&&b_{2k}(n,n+1-j)-b_{2k}(n+1,n+2-j)
=\sum\nolimits_{l=1}^{k}\alpha_{2k-2l+1}(n+1)b_{2l-1}(n,n+1-j).
\label{eq:3.8}
\end{eqnarray}
\end{prop}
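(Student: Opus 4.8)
The plan is to prove the two difference equations \eqref{eq:3.7} and \eqref{eq:3.8} directly from the defining formula \eqref{eq:3.6} for $b_k(n,j)$, using Proposition~\ref{prop:3.1} as the crucial input. The point is that $b_k(n,j)$ is, up to the factor $c_0\sum_u a_{j+u}(\cdot)$, just $d_{k-1}(n+1,u)$, so a difference identity for the $d$'s should push through to a difference identity for the $b$'s.

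For \eqref{eq:3.7}, I would write
\[
b_{2k+1}(n,j)-b_{2k+1}(n+1,j)
=c_0\sum\nolimits_{u=0}^{\infty}a_{j+u}\bigl\{d_{2k}(n+1,u)-d_{2k}(n+2,u)\bigr\},
\]
then apply Proposition~\ref{prop:3.1} with $n$ replaced by $n+1$ to rewrite the brace as $\sum_{l=1}^{k}\alpha_{2k-2l+1}(n+1)d_{2l-1}(n+1,u)$. Interchanging the (absolutely convergent) sums over $u$ and $l$ and recognizing $c_0\sum_u a_{j+u}d_{2l-1}(n+1,u)=b_{2l}(n,j)$ from \eqref{eq:3.6} gives exactly the right-hand side of \eqref{eq:3.7}. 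The argument for \eqref{eq:3.8} is parallel but one has to be careful with the shifted arguments: by \eqref{eq:3.6},
\[
b_{2k}(n,n+1-j)-b_{2k}(n+1,n+2-j)
=c_0\sum\nolimits_{u=0}^{\infty}a_{n+1-j+u}\bigl\{d_{2k-1}(n+1,u)-d_{2k-1}(n+2,u)\bigr\},
\]
so here I need a difference identity for the \emph{odd}-indexed $d$'s. That is not Proposition~\ref{prop:3.1} as stated (which handles $d_{2k}$), so the natural route is to derive the companion identity $d_{2k-1}(n+1,j)-d_{2k-1}(n+2,j)=\sum_{l=1}^{k-1}\alpha_{2k-2l-1}(n+1)d_{2l}(n+1,j)+\alpha_{?}(\cdot)\delta_{\cdot}$ — more cleanly, to split off the $v=n+1$ term in the recursion \eqref{eq:3.2} exactly as in the proof of Proposition~\ref{prop:3.1}, getting a term $\beta_{n+1+j}d_{2k-2}(n+1,\cdot)$-type contribution plus an inductive remainder. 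Once this odd-index difference formula is in hand, substituting and again swapping sums and using \eqref{eq:3.6} to fold $c_0\sum_u a_{n+1-j+u}d_{2l-1}(n+1,u)=b_{2l-1}(n,n+1-j)$ yields \eqref{eq:3.8}.

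The main obstacle I anticipate is bookkeeping rather than anything deep: getting the index shifts right (the asymmetry between the first and second arguments of $b_k$, the fact that one equation needs the $d_{2k}$-difference and the other the $d_{2k-1}$-difference, and the $n\mapsto n+1$ shift throughout), and confirming that every interchange of summation is legitimate. The latter is routine under (A1) or (A2): the sums defining $d_k(n,j)$ and hence $b_k(n,j)$ are absolutely convergent (as already noted after the definition of $d_k$ and of $\alpha_k(n)$), so Fubini--Tonelli applies at each step, exactly as in the proof of Proposition~\ref{prop:3.1}. I would therefore present the proof as: (i) reduce each of \eqref{eq:3.7}, \eqref{eq:3.8} to a difference of $d$'s via \eqref{eq:3.6}; (ii) invoke Proposition~\ref{prop:3.1} for the even case and its one-line analogue (split off the boundary term in \eqref{eq:3.2}) for the odd case; (iii) swap sums and re-fold using \eqref{eq:3.6}. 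No genuinely new idea beyond Proposition~\ref{prop:3.1} is required; the content is that the predictor-coefficient building blocks $b_k$ inherit the Szeg\H{o}-type recursion from the $d_k$'s, which is precisely what one needs to match \eqref{eq:3.5} against the Szeg\H{o} recursion \eqref{eq:3.1} in the proof of the main theorem.
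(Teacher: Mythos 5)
Your treatment of \eqref{eq:3.7} is correct and is exactly the paper's argument: expand via \eqref{eq:3.6}, apply Proposition~\ref{prop:3.1} with $n$ replaced by $n+1$, swap the absolutely convergent sums, and fold back with \eqref{eq:3.6}.

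For \eqref{eq:3.8}, however, there is a genuine gap, and it starts with your first displayed reduction. By \eqref{eq:3.6},
\[
b_{2k}(n+1,n+2-j)=c_0\sum\nolimits_{u=0}^{\infty}a_{n+2-j+u}\,d_{2k-1}(n+2,u),
\]
with $a$-index $n+2-j+u$, not $n+1-j+u$; so the difference $b_{2k}(n,n+1-j)-b_{2k}(n+1,n+2-j)$ is \emph{not} $c_0\sum_u a_{n+1-j+u}\{d_{2k-1}(n+1,u)-d_{2k-1}(n+2,u)\}$. After re-indexing ($u\mapsto u+1$ in the second sum) the two $a$-indices align but the second arguments of $d_{2k-1}$ no longer match, and a boundary term at $u=0$ appears. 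Already for $k=1$ your displayed right-hand side is $c_0\sum_u a_{n+1-j+u}(\beta_{n+1+u}-\beta_{n+2+u})$, which is not equal to the true difference $c_0a_{n+1-j}\beta_{n+1}=\alpha_1(n+1)b_1(n,n+1-j)$. Moreover, the ``companion identity'' you hope for, expressing $d_{2k-1}(n+1,j)-d_{2k-1}(n+2,j)$ through the $\alpha$'s and even-index $d$'s, does not exist in that clean form: for $k=1$ it would have to represent $\beta_{n+1+j}-\beta_{n+2+j}$, and in general only the \emph{even}-index differences telescope nicely (that is the whole content of Proposition~\ref{prop:3.1}). Your vaguer fallback (``split off the $v=n+1$ term in \eqref{eq:3.2}'') does not correspond to any term of that recursion.

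The correct route, which is what the paper does, is: treat $k=1$ directly (the re-indexing alone gives $\alpha_1(n+1)b_1(n,n+1-j)$); for $k\ge 2$, use \eqref{eq:3.2} once to write $d_{2k-1}(n+\cdot,u)=\sum_v\beta_{\cdot+u+v}d_{2(k-1)}(n+\cdot,v)$, re-index the $u$-sum so that the $a$-indices and the $\beta$-indices align, and split the difference into (i) an aligned part containing $d_{2(k-1)}(n+1,v)-d_{2(k-1)}(n+2,v)$, to which Proposition~\ref{prop:3.1} applies and which, after folding with \eqref{eq:3.2} and \eqref{eq:3.6}, yields the terms $l=2,\dots,k$ of \eqref{eq:3.8}, and (ii) the $u=0$ boundary term $c_0a_{n+1-j}\sum_v\beta_{n+1+v}d_{2(k-1)}(n+2,v)$, which by \eqref{eq:3.3} equals $\alpha_{2k-1}(n+1)b_1(n,n+1-j)$, i.e.\ precisely the $l=1$ term. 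Your proposed scheme, even if the nonexistent odd-index identity were granted, would miss exactly this $\alpha_{2k-1}(n+1)b_1$ contribution, so the missing idea is the re-indexing that produces and identifies the boundary term.
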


\begin{proof}
From (\ref{eq:3.4}) and (\ref{eq:3.6}), 
we easily obtain (\ref{eq:3.7}) in the following way:
\begin{align*}
b_{2k+1}(n,j)-b_{2k+1}(n+1,j)
&=c_0\sum\nolimits_{u=0}^{\infty}a_{j+u}\{d_{2k}(n+1,u)-d_{2k}(n+2,u)\}\\
&=c_0\sum\nolimits_{u=0}^{\infty}a_{j+u}
\sum\nolimits_{l=1}^{k}\alpha_{2k-2l+1}(n+1)
d_{2l-1}(n+1,u)\\
&=\sum\nolimits_{l=1}^{k}\alpha_{2k-2l+1}(n+1)
\left(c_0\sum\nolimits_{u=0}^{\infty}a_{j+u}
d_{2l-1}(n+1,u)\right)
=\sum\nolimits_{l=1}^{k}\alpha_{2k-2l+1}(n+1)b_{2l}(n,j).
\end{align*}

We turn to (\ref{eq:3.8}).
Since $d_1(n+1,u)=\alpha_1(n+1+u)$ and $b_1(n,j)=c_0a_j$, 
it follows from (\ref{eq:3.6}) that
\[
b_2(n,n+1-j)=c_0\sum\nolimits_{u=0}^{\infty}a_{n+1-j+u}d_1(n+1,u)
=\sum\nolimits_{u=n+1}^{\infty}\alpha_1(u)b_1(n,u-j).
\]
Similarly, 
$
b_2(n+1,n+2-j)=c_0\sum_{u=0}^{\infty}a_{n+2-j+u}d_1(n+2,u)
=\sum_{u=n+2}^{\infty}\alpha_1(u)b_1(n,u-j)$. 
Thus (\ref{eq:3.8}) holds for $k=1$. 
If $k\ge 2$, then, by (\ref{eq:3.2}) and  (\ref{eq:3.6}), we have
\begin{align*}
&b_{2k}(n,n+1-j)
=c_0\sum\nolimits_{u=0}^{\infty}a_{n+1-j+u}
\sum\nolimits_{v=0}^{\infty}\beta_{n+1+u+v}d_{2(k-1)}(n+1,v),\\
&b_{2k}(n+1,n+2-j)
=c_0\sum\nolimits_{u=1}^{\infty}a_{n+1-j+u}
\sum\nolimits_{v=0}^{\infty}\beta_{n+1+u+v}d_{2(k-1)}(n+2,v),
\end{align*}
whence $b_{2k}(n,n+1-j)-b_{2k}(n+1,n+2-j)=\mathrm{I}+\mathrm{II}$ 
with
\begin{align*}
&\mathrm{I}
:=c_0\sum\nolimits_{u=0}^{\infty}
a_{n+1-j+u}\sum\nolimits_{v=0}^{\infty}\beta_{n+1+u+v}
\left\{d_{2(k-1)}(n+1,v)-d_{2(k-1)}(n+2,v)\right\},\\
&\mathrm{II}
:=c_0a_{n+1-j}\sum\nolimits_{v=0}^{\infty}\beta_{n+1+v}d_{2(k-1)}(n+2,v).
\end{align*}
By (\ref{eq:3.2}), (\ref{eq:3.4}) and (\ref{eq:3.6}),
\begin{align*}
\mathrm{I}&=c_0\sum\nolimits_{u=0}^{\infty}a_{n+1-j+u}
\sum\nolimits_{v=0}^{\infty}\beta_{n+1+u+v}
\sum\nolimits_{l=1}^{k-1}\alpha_{2(k-1)-2l+1}(n+1)d_{2l-1}(n+1,v)\\
&=\sum\nolimits_{l=1}^{k-1}\alpha_{2(k-1)-2l+1}(n+1)
\left(c_0
\sum\nolimits_{u=0}^{\infty}a_{n+1-j+u}
\sum\nolimits_{u=0}^{\infty}\beta_{n+1+u+v}d_{2l-1}(n+1,v)\right)\\
&=\sum\nolimits_{l=1}^{k-1}\alpha_{2(k-1)-2l+1}(n+1)b_{2l+1}(n,n+1-j)
=\sum\nolimits_{l=2}^{k}\alpha_{2k-2l+1}(n+1)b_{2l-1}(n,n+1-j),
\end{align*}
while, by (\ref{eq:3.3}) and 
$b_1(n,n+1-j)=ca_{n+1-j}$, we have 
$\mathrm{II}=\alpha_{2k-1}(n+1)b_1(n,n+1-j)$. 
Thus (\ref{eq:3.8}) follows. \qed
\end{proof}

We are now ready to prove Theorem \ref{thm:2.1}.

\begin{proof}[of Theorem \ref{thm:2.1}]
For $n\in\mathbb{N}$ and $j=1,\dots,n$, we have 
$b_1(n,j)-b_1(n+1,j)=c_0a_{j}-c_0a_{j}=0$.
This, together with 
(\ref{eq:3.5}), (\ref{eq:3.7}) and (\ref{eq:3.8}),
yields 
\begin{align*}
\phi_{n,j}-\phi_{n+1,j}
&=\sum\nolimits_{k=1}^{\infty-}
\left\{b_{2k+1}(n,j)-b_{2k+1}(n+1,j)+b_{2k}(n,n+1-j)-b_{2k}(n+1,n+2-j)\right\}\\
&=\sum\nolimits_{k=1}^{\infty-}
\sum\nolimits_{l=1}^{k}\alpha_{2k-2l+1}(n+1)
\left\{b_{2l}(n,j)+b_{2l-1}(n,n+1-j)\right\}\\
&=\sum\nolimits_{l=1}^{\infty-}
\left\{b_{2l}(n,j)+b_{2l-1}(n,n+1-j)\right\}
\sum\nolimits_{k=l}^{\infty-}\alpha_{2k-2l+1}(n+1)
=\left\{\sum\nolimits_{k=1}^{\infty-}
\alpha_{2k-1}(n+1)\right\}\phi_{n,n+1-j}.
\end{align*}
Since $P_{[-n,-1]}X_0\neq 0$, we have $(\phi_{n,1},\dots,\phi_{n,n})\neq 0$. 
Combining these and (\ref{eq:3.1}), we obtain the theorem. \qed
\end{proof}


\section{Applications}\label{sec:4}


\subsection{Short memory processes}\label{subsec:4.1}
In this subsection, we apply Theorem \ref{thm:2.1} to 
the Verblunsky coefficients of short-memory processes.

We define
\[
F(j):=\left\{\sum\nolimits_{v=0}^{\infty}\vert c_v\vert\right\}
\left\{\sum\nolimits_{u=j}^{\infty}\vert a_u\vert\right\}, \qquad 
j=0,1,\dots.
\]
Then $F(j)$ decreases to zero as $j\to\infty$ under (A1). 
Recall $d_k(n,j)$ from Section \ref{sec:3}.

\begin{lem}\label{lem:4.1}
We assume\/ $(\mathrm{A1})$. 
Then $\sum_{u=0}^{\infty}
\vert d_{k}(n,u)\vert\le F(n)^k$ for $k,n\in\mathbb{N}$.
\end{lem}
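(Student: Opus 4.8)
The plan is to prove the bound $\sum_{u=0}^{\infty}|d_k(n,u)|\le F(n)^k$ by induction on $k$, using the recursion (\ref{eq:3.2}), namely $d_{k+1}(n,j)=\sum_{v=0}^{\infty}\beta_{n+j+v}d_k(n,v)$, together with the elementary estimate that controls $\beta_m$.

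First I would establish the base case $k=1$: since $d_1(n,u)=\beta_{n+u}$ and, by (\ref{eq:1.1}), $|\beta_m|\le\sum_{v=0}^{\infty}|c_v|\,|a_{v+m}|$, summing over $u\ge 0$ gives $\sum_{u=0}^{\infty}|d_1(n,u)|\le\sum_{u=0}^{\infty}\sum_{v=0}^{\infty}|c_v|\,|a_{v+n+u}|$. Reindexing $w=v+n+u$ (so $w$ ranges over $\{n,n+1,\dots\}$ and for each fixed $v$ we get all $w\ge v+n\ge n$), Tonelli lets me bound this by $\bigl(\sum_{v=0}^{\infty}|c_v|\bigr)\bigl(\sum_{w=n}^{\infty}|a_w|\bigr)=F(n)$. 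This is the step where the precise shape of $F(n)$ matters, and the triple index juggling is the only place one has to be a little careful.

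Next, for the induction step, assume $\sum_{u=0}^{\infty}|d_k(n,u)|\le F(n)^k$. From the recursion and Tonelli,
\[
\sum_{j=0}^{\infty}|d_{k+1}(n,j)|
\le\sum_{j=0}^{\infty}\sum_{v=0}^{\infty}|\beta_{n+j+v}|\,|d_k(n,v)|
=\sum_{v=0}^{\infty}|d_k(n,v)|\sum_{j=0}^{\infty}|\beta_{n+j+v}|.
\]
For the inner sum, $\sum_{j=0}^{\infty}|\beta_{n+j+v}|\le\sum_{j=0}^{\infty}\sum_{w=0}^{\infty}|c_w|\,|a_{w+n+j+v}|\le\bigl(\sum_{w=0}^{\infty}|c_w|\bigr)\bigl(\sum_{m=n}^{\infty}|a_m|\bigr)=F(n)$, since $w+n+j+v\ge n$; this bound is uniform in $v$. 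Plugging in and using the inductive hypothesis gives $\sum_{j=0}^{\infty}|d_{k+1}(n,j)|\le F(n)\sum_{v=0}^{\infty}|d_k(n,v)|\le F(n)^{k+1}$, completing the induction.

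I do not anticipate a genuine obstacle here — every series in sight converges absolutely under (A1), so all rearrangements are justified by Tonelli's theorem, and the argument is essentially a clean geometric-decay estimate. The only mild subtlety is bookkeeping: making sure that in each reindexing the AR-index stays $\ge n$ so that the tail sum $\sum_{m=n}^{\infty}|a_m|$ (rather than the full sum) appears, which is exactly what produces $F(n)$ and not merely $F(0)$ in the bound.
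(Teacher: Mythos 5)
Your proposal is correct and follows essentially the same route as the paper: induction on $k$ via the recursion (\ref{eq:3.2}), with the base case and the inner sum $\sum_{j}\vert\beta_{n+j+v}\vert$ both bounded by $F(n)$ through the estimate $\vert\beta_m\vert\le\sum_{v}\vert c_v\vert\,\vert a_{v+m}\vert$ and the tail sum of $\{a_n\}$. The only difference is that you spell out the reindexing and the uniformity in $v$ explicitly, which the paper leaves implicit.
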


\begin{proof}Let $n\in\mathbb{N}$. We use induction on $k$. 
Since $d_1(n,u)=\beta_{n+u}$, we have
\[
\sum\nolimits_{u=0}^{\infty}\vert d_1(n,u)\vert
=\sum\nolimits_{u=0}^{\infty}\vert \beta_{n+u}\vert
\le \sum\nolimits_{v=0}^{\infty}\vert c_v\vert
\sum\nolimits_{u=0}^{\infty}\vert a_{n+u+v}\vert
\le F(n).
\]
We assume 
$\sum_{u=0}^{\infty}\vert d_{k}(n,u)\vert\le F(n)^k$ for $k\in\mathbb{N}$. 
Then, by (\ref{eq:3.2}),
\begin{align*}
\sum\nolimits_{u=0}^{\infty}
\vert d_{k+1}(n,u)\vert
&\le \sum\nolimits_{v=0}^{\infty}\vert d_{k}(n,v)\vert\sum\nolimits_{u=0}^{\infty}
\vert\beta_{n+v+u}\vert
\le F(n)\sum\nolimits_{v=0}^{\infty}
\vert d_k(n,v)\vert\le F(n)^{k+1}.
\end{align*}
Thus the inequality also holds for $k+1$. \qed
\end{proof}

Notice that $\{a_n\}\in\ell^1$ implies $a_n\to 0$ as $n\to\infty$. 

\begin{thm}\label{thm:4.2}
We assume {\rm (A1)}. Then, for $N\in\mathbb{N}$ such that 
$F(N+1)<1$, 
the Verblunsky coefficients $\{\alpha(n)\}$ 
satisfy
\[
\vert\alpha(n)\vert\le \frac{\sum_{v=0}^{\infty}\vert c_v\vert}
{1-F(n+1)^2}\left(\max_{j\ge n}\vert a_j\vert\right), \qquad n\ge N.
\]
\end{thm}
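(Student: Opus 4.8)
The plan is to start from the main representation $\alpha(n)=\sum_{k=1}^{\infty-}\alpha_{2k-1}(n)$ of Theorem \ref{thm:2.1} and bound each term $\alpha_{2k-1}(n)$ using the identity $\alpha_{2k+1}(n)=\sum_{v=0}^{\infty}\beta_{n+v}d_{2k}(n+1,v)$ from (\ref{eq:3.3}) together with Lemma \ref{lem:4.1}. The key observation is that one factor $\beta_{n+v}$ should be estimated more carefully than by $\sum|a_j|$: writing $\beta_{n+v}=\sum_{u=0}^{\infty}c_u a_{n+v+u}$, we bound $|a_{n+v+u}|\le \max_{j\ge n}|a_j|$ so that $|\beta_{n+v}|\le(\sum_{u=0}^{\infty}|c_u|)(\max_{j\ge n}|a_j|)=:\sum_v|c_v|\cdot M_n$, uniformly in $v\ge 0$. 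Hence $|\alpha_{2k-1}(n)|\le \sum_v|c_v|\cdot M_n\cdot\sum_{v=0}^{\infty}|d_{2k-2}(n+1,v)|$ for $k\ge 2$, and by Lemma \ref{lem:4.1} the last sum is $\le F(n+1)^{2k-2}$; for $k=1$ the term is $\alpha_1(n)=\beta_n$, already bounded by $\sum_v|c_v|\cdot M_n$.

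Summing a geometric series then finishes the argument: for $n\ge N$, since $F$ decreases and $F(N+1)<1$ we have $F(n+1)<1$, so
\[
|\alpha(n)|\le\sum_{k=1}^{\infty}|\alpha_{2k-1}(n)|
\le \Bigl(\sum\nolimits_{v=0}^{\infty}|c_v|\Bigr)M_n\sum_{k=1}^{\infty}F(n+1)^{2k-2}
=\frac{\sum_{v=0}^{\infty}|c_v|}{1-F(n+1)^2}\,\Bigl(\max_{j\ge n}|a_j|\Bigr).
\]
This is exactly the claimed bound. Note that the same estimate also shows the series $\sum_{k=1}^{\infty}\alpha_{2k-1}(n)$ converges absolutely under (A1) with $F(n+1)<1$, so the $\sum^{\infty-}$ in Theorem \ref{thm:2.1} may be read as an ordinary absolutely convergent sum here; this incidentally justifies interchanging $|\cdot|$ with the infinite sum in the first inequality above.

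The only mild obstacle is bookkeeping with the indices in (\ref{eq:3.3}): one must check that pulling the uniform bound $M_n$ out of the factor $\beta_{n+v}$ (rather than out of one of the $\beta$'s buried inside $d_{2k-2}(n+1,v)$) is legitimate, i.e. that the remaining sum is genuinely $\sum_v|d_{2k-2}(n+1,v)|$ with first argument $n+1$, matching Lemma \ref{lem:4.1}; this is immediate from (\ref{eq:3.3}). One should also remark at the outset that $M_n=\max_{j\ge n}|a_j|$ is finite, which follows from $\{a_n\}\in\ell^1\Rightarrow a_n\to 0$ as already noted before the theorem. No part of the argument is delicate; it is a routine term-by-term estimate plus a geometric sum.
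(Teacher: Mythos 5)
Your proposal is correct and matches the paper's own argument essentially step for step: a uniform bound $\vert\beta_{n+v}\vert\le(\sum_u\vert c_u\vert)\max_{j\ge n}\vert a_j\vert$, the bound $\sum_v\vert d_{2k}(n+1,v)\vert\le F(n+1)^{2k}$ from Lemma \ref{lem:4.1} applied via (\ref{eq:3.3}), and summation of the resulting geometric series under $F(n+1)\le F(N+1)<1$. The remark that this also yields absolute convergence, so that $\sum^{\infty-}$ can be estimated termwise, is the same (implicit) justification the paper uses.
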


\begin{proof}
Recall $\alpha_{2k+1}(n)$ from \S 2. 
We have 
$
\vert \alpha_1(n)\vert
=\vert \beta_n\vert\le  \left(\max_{j\ge n}\vert a_j\vert\right)
\sum\nolimits_{v=0}^{\infty}\vert c_v\vert
$. 
By (\ref{eq:3.3}) and Lemma \ref{lem:4.1}, we also have 
$
\vert \alpha_{2k+1}(n)\vert
\le \sum\nolimits_{v=0}^{\infty}\vert\beta_{n+v}d_{2k}(n+1,v)\vert
\le F(n+1)^{2k}\left(\max_{j\ge n}\vert a_j\vert\right)
\sum\nolimits_{v=0}^{\infty}\vert c_v\vert
$ 
for $n,k\in\mathbb{N}$. 
Choose $N\in\mathbb{N}$ so that $F(N+1)<1$. Then, combining 
the estimates above with Theorem \ref{thm:2.1}, we see that, for $n\ge N$,
\[
\vert\alpha(n)\vert\le \left(\max_{j\ge n}\vert a_j\vert\right)
\sum\nolimits_{v=0}^{\infty}\vert c_v\vert\sum\nolimits_{k=0}^{\infty}F(n+1)^{2k}
=\frac{\sum\nolimits_{v=0}^{\infty}\vert c_v\vert}
{1-F(n+1)^2}\left(\max_{j\ge n}\vert a_j\vert\right).
\]
Thus the theorem follows. \qed
\end{proof}

For example, if, in addition to (A1), $a_n=O(n^{-p})$ as $n\to\infty$ for 
some $p>1$, then $\max_{j\ge n}\vert a_j\vert=O(n^{-p})$, whence, by 
Theorem \ref{thm:4.2}, $\alpha(n)=O(n^{-p})$ as $n\to\infty$.


\subsection{The FARIMA model}\label{subsec:4.2}

For 
$d\in (-1/2, 1/2)$ and $p, q\in\mathbb{N}\cup\{0\}$, 
a stationary process $\{X_n\}$ is said to be a FARIMA$(p,d,q)$ (or fractional 
ARIMA$(p,d,q)$) process if it has a spectral 
density $\Delta$ of 
the form
\[
\Delta(\theta)
=\frac{1}{2\pi}
\frac{\vert \Theta(e^{i\theta})\vert^2}{\vert \Phi(e^{i\theta})\vert^2}
\vert 1-e^{i\theta}\vert^{-2d},\qquad -\pi<\theta\le \pi,
\]
where $\Phi(z)$ and $\Theta(z)$ are polynomials with real coefficients 
of degrees $p$, $q$, respectively, satisfying the following condition: 
$\Phi(z)$ and $\Theta(z)$ have no common zeros, and have no zeros 
in the closed unit disk $\{z\in\mathbb{C}\ \vert\ \vert z\vert\le 1\}$.

In what follows, we assume that $\{X_n\}$ is a FARIMA$(p,d,q)$ process with $0<d<1/2$. 
Then $\{X_n\}$ satisfies (A2) for some constant 
function $\ell$ (cf.\ Inoue (2002, Corollary 3.1)).
Let $\{\alpha(n)\}$ be the Verblunsky coefficients of $\{X_n\}$. 
The aim of this subsection is to apply Theorem \ref{thm:2.1} to $\{\alpha(n)\}$ 
to prove the next theorem.

\begin{thm}\label{thm:4.3}
We have 
$n\alpha(n)=d+O(n^{-d})$ as $n\to\infty$.
\end{thm}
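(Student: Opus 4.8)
The plan is to use the representation $\alpha(n)=\sum_{k=1}^{\infty-}\alpha_{2k-1}(n)$ from Theorem \ref{thm:2.1} and show that the leading term $\alpha_1(n)=\beta_n$ contributes $d/n + O(n^{-1-d})$, while the sum of the remaining odd terms $\sum_{k\ge 2}\alpha_{2k-1}(n)$ is $O(n^{-1-d})$ (in fact likely $O(n^{-1-2d})$ or better). First I would pin down the asymptotics of $\beta_n=\sum_{v=0}^\infty c_v a_{v+n}$ under (A2). Since $c_v\sim v^{-(1-d)}\ell(v)$ and $a_{v+n}\sim (v+n)^{-(1+d)}\ell(v+n)^{-1}\cdot d\sin(\pi d)/\pi$, with $\ell$ a constant for the FARIMA case, the sum $\sum_v c_v a_{v+n}$ is of Euler-type: writing $\beta_n\approx \frac{d\sin(\pi d)}{\pi}\sum_{v\ge 1} v^{-(1-d)}(v+n)^{-(1+d)}$, one recognizes (after the substitution $v=nt$) a Riemann sum for $n^{-1}\int_0^\infty t^{-(1-d)}(1+t)^{-(1+d)}\,dt = n^{-1}B(d,1-d)=n^{-1}\pi/\sin(\pi d)$, giving $\beta_n\sim d/n$. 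To get the $O(n^{-d})$ remainder in $n\beta_n = d + O(n^{-d})$ I would need the sharper FARIMA expansions of $c_n$ and $a_n$ — namely $c_n = n^{-(1-d)}(\kappa_0 + O(n^{-1}))$ and similarly for $a_n$, which follow from Inoue (2002) together with the fact that $\Theta/\Phi$ is analytic and nonvanishing on $|z|\le 1$ — and then estimate the error in the Riemann-sum approximation; the dominant error comes from the lower end $v=O(1)$ where $c_v$ is not yet in its asymptotic regime, and this contributes a term of order $n^{-(1+d)}$ times $\sum_{v} |c_v - \kappa_0 v^{-(1-d)}|$, i.e.\ $O(n^{-(1+d)})$, which is even smaller than $n^{-(1+d)}\cdot n = n^{-d}$ after multiplying by $n$. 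Actually the genuine $O(n^{-d})$ term, I expect, is what survives from the next correction in the integral expansion, and I would track it by an Euler--Maclaurin or by splitting the sum at $v=\sqrt n$.

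Next I would bound the higher-order terms. Using $\beta_m = O(m^{-1})$ uniformly (which follows from (A2) and $\sum|c_v|<\infty$, since $|\beta_m|\le (\max_{j\ge m}|a_j|)\sum|c_v| = O(m^{-(1+d)})$ — in fact even better than $m^{-1}$), I would estimate $\alpha_3(n) = \sum_{v_1,v_2\ge 0}\beta_{n+v_1}\beta_{n+1+v_1+v_2}\beta_{n+1+v_2}$. Each $\beta$ index is $\ge n$, so $|\alpha_3(n)|\le \sum_{v_1,v_2}|\beta_{n+v_1}|\,|\beta_{n+1+v_1+v_2}|\,|\beta_{n+1+v_2}|$. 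Using $|\beta_{n+1+v_1+v_2}|\le |\beta_{n+1+v_2}|$-type monotone bounds and $\sum_{v\ge 0}|\beta_{n+v}|\le F(n)$-analogue $= O(n^{-d})$ under (A2), one gets $|\alpha_3(n)| = O(n^{-d}\cdot n^{-d}\cdot \max_{j\ge n}|\beta_j|)=O(n^{-d}\cdot n^{-d}\cdot n^{-1})$, hence $n\alpha_3(n)=O(n^{-2d})$; more crudely even $O(n^{-d})$. Similarly $|\alpha_{2k-1}(n)| \le C^{k} n^{-2(k-1)d}\max_{j\ge n}|\beta_j|$ for a constant $C$ (the same bookkeeping as in Lemma \ref{lem:4.1} but with the tail sum $\sum_{u\ge 0}|\beta_{n+u}| = O(n^{-d})$ in place of $F(n)$), so for $n$ large enough that $Cn^{-2d}<1/2$, the series $\sum_{k\ge 2}\alpha_{2k-1}(n)$ converges absolutely and is $O(n^{-1-2d})$. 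Multiplying by $n$ this is $O(n^{-2d})=O(n^{-d})$, which is absorbed into the stated remainder.

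Combining, $n\alpha(n) = n\beta_n + n\sum_{k\ge 2}\alpha_{2k-1}(n) = (d+O(n^{-d})) + O(n^{-d}) = d+O(n^{-d})$.

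The main obstacle is the first step: obtaining $n\beta_n = d + O(n^{-d})$ with the correct remainder order. The crude Riemann-sum argument gives the leading term $d$ easily but a remainder only of order $O(n^{-\min(d,1-d)})$ or worse unless one is careful. The cleanest route is probably to exploit that, for a FARIMA process, $h(z) = (1-z)^{-d}\Theta(z)/\Phi(z)$ with $\Theta/\Phi$ analytic and zero-free on $|z|\le 1$, and $\bar h(e^{i\theta})/h(e^{i\theta}) = e^{-2id\,\arg(1-e^{i\theta})}\cdot\overline{(\Theta/\Phi)}/(\Theta/\Phi)$, so that $-\beta_n$ is the $n$-th Fourier coefficient of a function that is smooth except at $\theta=0$, where $\arg(1-e^{i\theta})$ has a jump-type singularity. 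A singularity analysis (Darboux's method / stationary phase at the endpoint) of this Fourier coefficient then yields $\beta_n = d/n + (\text{explicit})\,n^{-1-d} + o(n^{-1-d})$ directly. Either way — Riemann sum with Euler--Maclaurin correction, or Fourier/Darboux — the bookkeeping of the subleading term is where the real work lies; everything downstream is routine geometric-series estimation of the type already used in Section \ref{subsec:4.1}.
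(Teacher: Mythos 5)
There is a genuine gap, and it is fatal to the structure of your argument rather than just to the bookkeeping of the remainder. Your leading-term computation is wrong: with $c_v\sim \kappa_0 v^{-(1-d)}$ and $a_{v+n}\sim \kappa_0^{-1}(v+n)^{-(1+d)}d\sin(\pi d)/\pi$, the Riemann-sum limit is
$\frac{d\sin(\pi d)}{\pi}\,n^{-1}\int_0^\infty t^{d-1}(1+t)^{-(1+d)}dt
=\frac{d\sin(\pi d)}{\pi}\,n^{-1}B(d,1)=\frac{\sin(\pi d)}{\pi n}$,
not $d/n$; you evaluated the Beta integral as $B(d,1-d)=\pi/\sin(\pi d)$, but the correct parameters are $(d,1)$, giving $1/d$. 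This matches the paper's exact formula $\beta_n'=\sin(\pi d)/\{\pi(n-d)\}$ for FARIMA$(0,d,0)$ (Lemma \ref{lem:4.4}) together with Proposition \ref{prop:4.5}. Since $\sin(\pi d)/\pi<d$ for $0<d<1/2$, the first term $\alpha_1(n)=\beta_n$ alone does \emph{not} produce the constant $d$, and consequently the higher-order terms cannot be error terms. Indeed your bounds on them are also false under (A2): $\sum_v|c_v|=\infty$ (since $c_v\asymp v^{d-1}$), so $|\beta_m|\le(\max_{j\ge m}|a_j|)\sum_v|c_v|$ is vacuous; in fact $\beta_m\asymp m^{-1}$, so $\sum_{v\ge 0}|\beta_{n+v}|$ diverges rather than being $O(n^{-d})$, and each $\alpha_{2k-1}(n)$ is of exact order $n^{-1}$ (roughly $\tau_{2k-1}\sin(\pi d)^{2k-1}/n$), not $O(n^{-1-2(k-1)d})$. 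The whole series is needed to build the constant: $\sum_k\tau_{2k-1}\sin(\pi d)^{2k-1}=\pi^{-1}\arcsin(\sin\pi d)=d$.

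The paper's proof avoids computing any single term asymptotically sharply. It compares $\{X_n\}$ term by term with the exactly solvable FARIMA$(0,d,0)$ process $\{X_n'\}$: there $\beta_n'$ is known in closed form and, crucially, $\alpha'(n)=d/(n-d)$ exactly (Hosking). Proposition \ref{prop:4.5} gives $\beta_n=\beta_n'(1+\delta_n)$ with $|\delta_n|\le Mn^{-d}$, which propagates to $|\alpha_{2k-1}(n)-\alpha'_{2k-1}(n)|\le (2k-1)Mn^{-(d+1)}\{r^2\sin(\pi d)\}^{2k-1}\tau_{2k-1}$, uniformly in $k$, using the bound $\alpha'_{2k-1}(n)\le n^{-1}\{r\sin(\pi d)\}^{2k-1}\tau_{2k-1}$ from Inoue--Kasahara (2006); summing over $k$ (the derentiated arcsine series converges since $r^2\sin(\pi d)<1$) gives $\alpha(n)=d/(n-d)+O(n^{-(1+d)})$. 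If you want to salvage your route, you would have to carry out exactly this kind of full-series comparison; your singularity-analysis idea for $\beta_n$ is fine as far as it goes, but it only recovers the first term $\sin(\pi d)/(\pi n)$ and leaves the dominant part of $\alpha(n)$ unaccounted for.
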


Theorem \ref{thm:4.3} is more precise than Inoue (2008, Theorem 2.5) with $0<d<1/2$, 
in that the former gives an estimate on the remainder term. 
The rest of this subsection is devoted to the proof of 
Theorem \ref{thm:4.3}.

As before, we denote by $\{c_n\}$ and $\{a_n\}$ the 
MA and AR coefficients, respectively, of $\{X_n\}$. 
We also consider a FARIMA$(0,d,0)$ process $\{X_n'\}$ 
satisfying 
$E[(X_n')^2]=\Gamma(1-2d)/\Gamma(1-d)^2$. 
The AR coefficients $\{a_n'\}$ and MA coefficients $\{c_n'\}$ of $\{X'_n\}$ 
are given by
\[
a_n'=\frac{\Gamma(n-d)d}{\Gamma(n+1)\Gamma(1-d)},
\qquad
c_n'=\frac{\Gamma(n+d)}{\Gamma(n+1)\Gamma(d)},\qquad n=0,1,\dots
\]
(see, e.g., Brockwell and Davis (1991, Section 13.2)). 
Notice that $c_n'>0$ for $n\ge 0$ and $a_n'>0$ for $n\ge 1$.
Put
\[
\beta'_n:=\sum\nolimits_{v=0}^{\infty}c_v'a_{n+v}',\qquad n=0,1,\dots.
\]

\begin{lem}\label{lem:4.4}We have 
$\beta'_{n}=\sin(\pi d)/\{\pi(n-d)\}$ for $n=0,1,\dots$.
\end{lem}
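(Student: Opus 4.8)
The plan is to recognize the sum defining $\beta'_n$ as a known hypergeometric-type convolution and evaluate it in closed form. First I would write out the summand explicitly using the Gamma-function formulas for $c'_v$ and $a'_{n+v}$:
\[
\beta'_n=\sum_{v=0}^{\infty}\frac{\Gamma(v+d)}{\Gamma(v+1)\Gamma(d)}\cdot\frac{\Gamma(n+v-d)\,d}{\Gamma(n+v+1)\,\Gamma(1-d)}.
\]
Each factor can be expressed via Pochhammer symbols: $\Gamma(v+d)/\Gamma(d)=(d)_v$, $\Gamma(v+1)=v!$, and similarly $\Gamma(n+v-d)/\Gamma(n+v+1)$ can be rewritten by pulling out the $v=0$ term, so that the series becomes a Gauss hypergeometric series ${}_2F_1(a,b;c;1)$ with argument $1$. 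I expect the parameters to come out to something like $a=d$, $b=n-d$, $c=n+1$, up to the constant prefactor $d\,\Gamma(n-d)/\{\Gamma(n+1)\Gamma(1-d)\}$.

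Next I would apply Gauss's summation theorem ${}_2F_1(a,b;c;1)=\Gamma(c)\Gamma(c-a-b)/\{\Gamma(c-a)\Gamma(c-b)\}$, valid since $\mathrm{Re}(c-a-b)=1-n+\,\text{(something)}$ — here one must check the convergence condition $\mathrm{Re}(c-a-b)>0$ holds, which with $c-a-b=(n+1)-d-(n-d)=1>0$ it does, uniformly in $n$. Plugging in and simplifying the resulting ratio of four Gamma functions, the $n$-dependent factors should telescope, leaving $\Gamma(1)\Gamma(1)/\{\Gamma(1-d)\Gamma(\,\cdots)\}$ times the prefactor, and after using the reflection formula $\Gamma(d)\Gamma(1-d)=\pi/\sin(\pi d)$ the whole expression should collapse to $\sin(\pi d)/\{\pi(n-d)\}$.

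The main obstacle will be the bookkeeping in step one: correctly identifying which Pochhammer symbols appear and not dropping the $n$-dependent constant, since a sign or a shift error in the parameters $a,b,c$ will produce the wrong final formula. A cleaner alternative, which I would pursue if the hypergeometric manipulation gets unwieldy, is to work at the level of generating functions: $\sum c'_v z^v=(1-z)^{-d}$ and $\sum a'_v z^v = d\,(1-z)^{d-1}$ (up to the $a'_0$ term), so $\beta'_n$ is the $n$-th coefficient of the phase-type function obtained by a suitable contour-integral / Cauchy-product argument, i.e. $\beta'_n=\frac{1}{2\pi i}\oint z^{-n-1}(1-z)^{-d}\cdot d\,(1-\bar z)^{d-1}$-style manipulation; in fact $\sum_n \beta'_n z^n$ should be identifiable with a multiple of $(1-z)^{-1}$ twisted by the power $d$, whose coefficients are exactly $\sin(\pi d)/\{\pi(n-d)\}$ by the standard binomial-series expansion of $(1-z)^{-d}$ combined with the Beta-integral identity $\int_0^1 t^{v+d-1}(1-t)^{-d}\,dt=\Gamma(v+d)\Gamma(1-d)/\Gamma(v+1)$. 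Either route reduces the lemma to one application of Gauss summation (or equivalently the Beta integral) plus the reflection formula, so no delicate analysis is needed — only care with the parameters.
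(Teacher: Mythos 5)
Your primary route is exactly the paper's proof: rewrite the sum as $\frac{d\,\Gamma(n-d)}{\Gamma(n+1)\Gamma(1-d)}\,{}_2F_1(d,\,n-d;\,n+1;\,1)$, apply Gauss's summation theorem (the convergence check $c-a-b=1>0$ is correct), and finish with the reflection formula $\Gamma(d)\Gamma(1-d)=\pi/\sin(\pi d)$, which collapses everything to $\sin(\pi d)/\{\pi(n-d)\}$. The proposal is correct and essentially identical to the paper's argument (the alternative generating-function sketch is not needed, and its formula for $\sum_v a'_v z^v$ is off, but that does not affect the main route).
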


\begin{proof}Using the hypergeometric function, we have, for 
$n\ge 0$,
\begin{align*}
\beta'_n
&=\frac{\Gamma(n-d)d}{\Gamma(n+1)\Gamma(1-d)}
\sum_{v=0}^{\infty}\frac{\Gamma(d+v)}{\Gamma(d)}\cdot
\frac{\Gamma(n-d+v)}{\Gamma(n-d)}\cdot\frac{\Gamma(n+1)}{\Gamma(n+1+v)}
\cdot\frac{1}{v!}
=\frac{\Gamma(n-d)d}{\Gamma(n+1)\Gamma(1-d)}F(d,n-d;n+1;1)\\
&=\frac{\Gamma(n-d)d}{\Gamma(n+1)\Gamma(1-d)}
\cdot \frac{\Gamma(n+1)}{\Gamma(n+1-d)\Gamma(1+d)}
=\frac{\sin(\pi d)}{\pi}\cdot\frac{1}{n-d},
\end{align*}
as desired. \qed
\end{proof}

\begin{prop}\label{prop:4.5}
There exist a real sequence $\{\delta_n\}_{n=1}^{\infty}$ and 
$M\in (0,\infty)$ such that 
$\beta_n=\beta'_n\{1+\delta_n\}$ and 
$\vert \delta_n\vert\le Mn^{-d}$ for $n\in\mathbb{N}$.
\end{prop}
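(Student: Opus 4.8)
The plan is to compare the AR and MA coefficients $\{a_n\},\{c_n\}$ of the general FARIMA$(p,d,q)$ process with the corresponding coefficients $\{a_n'\},\{c_n'\}$ of the pure FARIMA$(0,d,0)$ process, and to push this comparison through the defining sum $\beta_n=\sum_{v=0}^\infty c_v a_{n+v}$. The starting point is the factorization of the Szeg\"o function: writing $h(z)$ for the Szeg\"o function of $\{X_n\}$ and $h'(z)=(1-z)^{-d}$ (up to the normalizing constant) for that of $\{X_n'\}$, one has $h(z)=R(z)h'(z)$ where $R(z)=\Theta(z)/\Phi(z)$ (times a constant) is \emph{rational}, analytic and non-vanishing in a disk of radius $\rho>1$ because $\Phi,\Theta$ have no zeros in $\{|z|\le 1\}$. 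Consequently $c_n=\sum_{j}r_j c_{n-j}'$ and $a_n=\sum_j s_j a_{n-j}'$, where $R(z)=\sum r_jz^j$ and $-1/R(z)\cdot(\text{const})=\sum s_j z^j$ both have \emph{exponentially decaying} coefficients, say $|r_j|+|s_j|\le C\rho^{-j}$.

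First I would record the elementary asymptotics $c_n'\sim n^{d-1}/\Gamma(d)$ and $a_n'\sim d\,n^{-d-1}/\Gamma(1-d)$, together with the more precise one-term expansions that follow from $\Gamma(n+x)/\Gamma(n+1)=n^{x-1}(1+O(n^{-1}))$; these give $c_n=A\,n^{d-1}(1+O(n^{-1}))$ and $a_n=B\,n^{-d-1}(1+O(n^{-1}))$ for explicit constants $A,B$, using the exponential decay of $\{r_j\},\{s_j\}$ to control the convolution tails. Next I would split $\beta_n=\sum_{v=0}^\infty c_v a_{n+v}$ at $v=\lfloor n/2\rfloor$: on the range $v> n/2$ one uses $|a_{n+v}|\le C(n+v)^{-d-1}\le Cn^{-d-1}$ and $\sum_{v}|c_v|<\infty$ (which holds under (A2) since $d<1/2$ forces... actually $c_n\in\ell^2$, but $c_n'\sim n^{d-1}$ is summable only for $d<0$; here one instead bounds $\sum_{v>n/2}|c_v a_{n+v}|$ by $\big(\sum_{v>n/2}|c_v|^2\big)^{1/2}\big(\sum_{v>n/2}|a_{n+v}|^2\big)^{1/2}$ or, more cheaply, by absorbing one factor of $v^{d-1}$ and noting $d-1<-1/2$), and the whole tail is $O(n^{-d-1+\epsilon})=o(\beta_n')$. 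On the main range $0\le v\le n/2$ one substitutes the expansions $c_v=A v^{d-1}+\cdots$, $a_{n+v}=B(n+v)^{-d-1}+\cdots$ and compares term by term with $\beta_n'=\sum_v c_v' a_{n+v}'$, estimated the same way; here I would exploit Lemma \ref{lem:4.4}, which gives the clean closed form $\beta_n'=\sin(\pi d)/\{\pi(n-d)\}\sim 1/(\pi n)\cdot\sin(\pi d)$, so that the claim reduces to showing $\beta_n-\beta_n'=O(n^{-1-d})$, i.e. $\delta_n:=\beta_n/\beta_n'-1=O(n^{-d})$.

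The cleanest route is probably not to expand $c_v,a_{n+v}$ individually but to write $\beta_n=\sum_{j,k} r_j s_k \sum_{v} c_{v-j}' a_{n+v-k}'=\sum_{j,k}r_j s_k\,\beta_{n+j-k}'$ (a double convolution, justified by absolute convergence from the exponential decay of $r,s$ and $\sum_v|c_v' a_{n+v}'|<\infty$), and then use $\beta_m'=\frac{\sin\pi d}{\pi}\frac{1}{m-d}=\frac{\sin\pi d}{\pi}\big(\frac1n+\frac{d-(j-k)}{n^2}+O(n^{-3})\big)$ for $m=n+j-k$. Summing against $\sum_{j,k}r_j s_k$: the leading $1/n$ term is multiplied by $\big(\sum_j r_j\big)\big(\sum_k s_k\big)$, which one identifies with the constant relating $\beta_n$ to $\beta_n'$ at leading order; matching the known leading asymptotics $\beta_n\sim d/n$ (from (A2) via the formula for $a_n$, $c_n$) and $\beta_n'\sim\sin(\pi d)/(\pi n)$ forces this constant to be $\pi d/\sin(\pi d)$, so that $\beta_n/\beta_n'=1+O(1/n)$ — but this only gives $|\delta_n|=O(1/n)$, which is \emph{better} than $O(n^{-d})$ since $d<1/2<1$. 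The gap between $O(1/n)$ and the claimed $O(n^{-d})$ shows the real work is hidden in justifying that the rational factor contributes only lower-order corrections; the honest statement is that the $O(n^{-3})$ remainders in $\beta_{n+j-k}'$, once summed against the (absolutely convergent) $r_j s_k$, stay $O(n^{-2})$, hence $|\delta_n|=O(n^{-1})\subseteq O(n^{-d})$.

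The main obstacle I anticipate is \textbf{not} the algebra but justifying the interchange of summations in the double-convolution identity $\beta_n=\sum_{j,k}r_js_k\,\beta'_{n+j-k}$ and controlling it uniformly in $n$: one must check that $\sum_{j,k}|r_j||s_k|\sum_v|c'_{v-j}||a'_{n+v-k}|<\infty$, for which the exponential bounds $|r_j|,|s_k|\le C\rho^{-|j|}$ (valid since $R$ is rational with poles off the closed unit disk) combine with the crude bound $\sum_v|c'_{v-j}a'_{n+v-k}|\le C\,(\text{const})$ coming from $c_v'\in\ell^2$, $a_v'\in\ell^2$ and Cauchy–Schwarz — but care is needed because $\beta'_m$ is only defined for $m\ge 0$, so the shifted indices $n+j-k$ must be handled (for $n$ large, $n+j-k\ge 1$ on the dominant range, and the finitely-many negative-index terms, if any appear from large $|j-k|$, are killed by the exponential weights). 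Once this bookkeeping is in place, substituting the exact formula from Lemma \ref{lem:4.4} and expanding $1/(n+j-k-d)$ to second order yields $\beta_n=\frac{\sin\pi d}{\pi n}\big(C_0+O(n^{-1})\big)$ with $C_0=\sum_{j,k}r_js_k$, and then $\delta_n=\beta_n/\beta'_n-1=O(n^{-1})$, which is stronger than the asserted $O(n^{-d})$; taking $M$ large enough (using $n^{-1}\le n^{-d}$ for $n\ge 1$ when $d\le 1$) completes the proof.
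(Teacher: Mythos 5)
Your route is genuinely different from the paper's, and in principle sharper. The paper compares coefficients termwise, writing $c_n=\{K_1+s_n\}c_n'$ and $a_n=\{K_1^{-1}+t_n\}a_n'$ via the remainder asymptotics of Inoue (2002, Lemma 2.2), and then bounds the three correction sums \emph{in absolute value}; the dominant one, $\sum_v|s_v|c_v'a_{n+v}'=O(n^{-(1+d)})$, is what produces the exponent $n^{-d}$ in $\delta_n$. You instead factor the Szeg\"o function, $h=Rh'$ with $R=\pm\Theta/\Phi$ analytic and zero-free on a disk of radius $\rho>1$, so that $c=r*c'$ and $a=q*a'$ with exponentially decaying kernels, and reduce everything to the identity $\beta_n=\sum_{j,k}r_jq_k\,\beta'_{n+j-k}+(\text{exponentially small})$ combined with the exact formula of Lemma \ref{lem:4.4}. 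This exploits a cancellation that the paper's absolute-value bounds throw away, and when carried out correctly it yields $\beta_n-\beta'_n=O(n^{-2})$, i.e.\ $\delta_n=O(n^{-1})$, which is stronger than the stated $O(n^{-d})$ and certainly suffices.

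However, as written the decisive step is broken. Everything hinges on the constant multiplying the leading term, $C_0=\bigl(\sum_j r_j\bigr)\bigl(\sum_k q_k\bigr)$, being equal to $1$; you never establish this, and in fact you argue for the wrong value: you invoke ``$\beta_n\sim d/n$'', which is false (it is $\alpha(n)\sim d/n$; under (A2) one has $\beta_n\sim\sin(\pi d)/(\pi n)$, the same leading behaviour as $\beta'_n$), and from it you ``deduce'' $C_0=\pi d/\sin(\pi d)\neq 1$ --- which would force $\beta_n/\beta'_n\to\pi d/\sin(\pi d)$ and hence $\delta_n\not\to0$, contradicting the proposition and your own final conclusion $\delta_n=O(n^{-1})$. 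The correct argument is immediate and needs no asymptotic matching at all: the AR-side kernel is the Taylor coefficient sequence of $1/R$ (from $-1/h=(1/R)\cdot(-1/h')$, not ``$-1/R$ times a constant'' as you wrote, which would also flip a sign), and since $\Phi,\Theta$ have no zeros on the closed unit disk, $R(1)\neq0$, so $C_0=R(1)\cdot R(1)^{-1}=1$. With that fixed, and with the bookkeeping you sketch made explicit (the finitely many shifts with $n+j-k<0$ and the range $|j-k|>n/2$ are controlled by the exponential decay of $r,q$ together with the uniform bound $|\beta'_m|\le\sin(\pi d)/(\pi d)$, and the expansion of $1/(n+j-k-d)$ is used only for $|j-k|\le n/2$), your proof closes. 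The exploratory middle paragraph (splitting at $v=n/2$, momentarily assuming $\sum_v|c_v|<\infty$, which fails under (A2)) should simply be deleted, since you abandon it anyway.
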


\begin{proof}
By Inoue (2002, Lemma 2.2), we have, as $n\to\infty$,
\begin{align*}
\frac{c_n}{n^{d-1}}=\frac{K_1}{\Gamma(d)}+O\left(n^{-1}\right),
\quad
\frac{a_n}{n^{-d-1}}=-\frac{1}{K_1\Gamma(-d)}+O\left(n^{-1}\right),\quad 
\frac{c_n'}{n^{d-1}}=\frac{1}{\Gamma(d)}+O\left(n^{-1}\right),
\quad
\frac{a_n'}{n^{d-1}}=-\frac{1}{\Gamma(-d)}+O\left(n^{-1}\right),
\end{align*}
where $K_1:=\theta(1)/\phi(1)>0$. 
Hence we may write 
$c_n=\{K_1+s_n\}c_n'$ for $n\ge 0$ and 
$a_n=\{(1/K_1)+t_n\}a_n'$ for $n\ge 1$, 
where $\{s_n\}$ and $\{t_n\}$ are sequences satisfying 
$
\vert s_n\vert \le L/(n+1)
$ for $n\ge 0$ and 
$\vert t_n\vert \le L/n$ for $n\ge 1$, for some $L\in (0,\infty)$. 

We have, for $n=1,2,\dots$,
\[
\vert \beta_n-\beta'_n\vert
\le \sum\nolimits_{v=0}^{\infty}\vert s_v\vert c_v'a_{n+v}'
+K_1\sum\nolimits_{v=0}^{\infty}\vert t_{n+v}\vert c_v'a_{n+v}'
+(1/K_1)\sum\nolimits_{v=0}^{\infty}\vert s_vt_{n+v}\vert c_v'a_{n+v}'.
\]
From $c_n'/(n+1)\sim 1/\{n^{2-d}\Gamma(d)\}$ as $n\to\infty$, 
we see that
\[
\sum\nolimits_{v=0}^{\infty}\vert s_v\vert c_v'a_{n+v}'
\le 
L\sum\nolimits_{v=0}^{\infty}\frac{c_v'}{v+1}a_{n+v}'
\sim a_n'L\sum\nolimits_{v=0}^{\infty}\frac{c_v'}{v+1},\qquad n\to\infty.
\]
Hence, using $a_n'\sim\mathrm{constant}\cdot n^{-(1+d)}$ as $n\to\infty$,
we get 
$
\sum\nolimits_{v=0}^{\infty}\vert s_v\vert c_v'a_{n+v}'=O\left(n^{-(1+d)}\right)
$ as $n\to\infty$. 
Similarly, as $n\to\infty$,
\[
\sum\nolimits_{v=0}^{\infty}\vert t_{n+v}\vert c_v'a_{n+v}'=O\left(n^{-(2+d)}\right),
\qquad 
\sum\nolimits_{v=0}^{\infty}\vert s_vt_{n+v}\vert c_v'a_{n+v}'
=O\left(n^{-(2+d)}\right).
\]
Combining these and 
$\beta'_n\sim\pi^{-1}\sin(\pi d)\,n^{-1}$, 
we obtain the proposition. \qed
\end{proof}

\begin{proof}[of Theorem \ref{thm:4.3}]
By Theorem \ref{thm:2.1}, the Verblunsky coefficients 
$\{\alpha(n)\}$ of $\{X_n\}$ and $\{\alpha'(n)\}$ of 
$\{X'_n\}$ admit the representations 
$
\alpha(n)=\sum\nolimits_{k=1}^{\infty-}\alpha_{2k-1}(n)
$ and 
$
\alpha'(n)=\sum\nolimits_{k=1}^{\infty-}\alpha'_{2k-1}(n)
$, respectively, 
where $\alpha_{2k-1}(\cdot)$ are those defined 
for $\{X_n\}$ in \S 2, while 
$\alpha'_{2k-1}(\cdot)$ are their counterparts defined for 
$\{X_n'\}$, that is, 
$\alpha_1'(n)=\beta'_n$ and, 
for $k=3,5,\dots$,
\[
\alpha_k'(n)=
\sum\nolimits_{m_1=0}^{\infty}\cdots\sum_{m_{k-1}=0}^{\infty}\nolimits
\beta'_{n+1+m_1}\beta'_{n+1+m_1+m_2}
\cdots\beta'_{n+1+m_{k-2}+m_{k-1}}\beta'_{n+m_{k-1}}.
\]

For $k\in\mathbb{N}$, we define $\tau_{2k-1}\in (0,\infty)$ by $\tau_{2k-1}:=
(2k-2)!/[\pi2^{2k-2}((k-1)!)^2(2k-1)]$, or by
\begin{equation}
\sum\nolimits_{k=1}^{\infty}\tau_{2k-1}x^{2k-1}=\pi^{-1}\arcsin x,\qquad \vert x\vert<1
\label{eq:4.1}
\end{equation}
(see Inoue and Kasahara (2006, Lemma 3.1) and Inoue (2008, Section 5)). 
Let $M$ be as in Proposition \ref{prop:4.5} and let $r>1$ be chosen so that $r^2\sin(\pi d)<1$. 
Then, as in the proof of Inoue and Kasahara (2006, Proposition 3.2), 
there exists an integer $N$ independent of $k$ such that
\[
1+(M/n^d)\le r,\qquad 
\alpha'_{2k-1}(n)\le \frac{1}{n}\{r\sin(\pi d)\}^{2k-1}\tau_{2k-1},\qquad n\ge N,\ k\ge 1.
\]
By Proposition \ref{prop:4.5}, we have 
$\vert\beta_{n+v}\vert\le \left(1+Mn^{-d}\right)\beta'_{n+v}$ and 
$\vert\beta_{n+v}-\beta'_{n+v}\vert\le Mn^{-d}\beta'_{n+v}$ 
for $n\ge 1$ and $v\ge 0$. 
We also have 
$(1+x)^k-1\le kx(1+x)^k$ for $x\ge 0$. 
Hence, for $n\ge N$, 
\begin{align*}
&\vert \alpha_3(n)-\alpha'_3(n)\vert
\le \sum\nolimits_{m_1=0}^{\infty}\sum\nolimits_{m_2=0}^{\infty}
\vert\beta_{n+1+m_1}-\beta'_{n+1+m_1}\vert\cdot
\vert\beta_{n+1+m_{1}+m_{2}}\vert\cdot\vert\beta_{n+m_{2}}\vert \\
&+\sum\nolimits_{m_1=0}^{\infty}\sum\nolimits_{m_2=0}^{\infty}
\beta'_{n+1+m_1}\vert\beta_{n+1+m_{1}+m_{2}}-\beta'_{n+1+m_{1}+m_{2}}\vert
\cdot\vert\beta_{n+m_{2}}\vert
+\sum\nolimits_{m_1=0}^{\infty}\sum\nolimits_{m_2=0}^{\infty}
\beta'_{n+1+m_1}\beta'_{n+1+m_{1}+m_{2}}
\vert\beta_{n+m_{2}}-\beta'_{n+m_{2}}\vert\\
&\le 
Mn^{-d}\left\{(1+Mn^{-d})^2+(1+Mn^{-d})+1\right\}\alpha'_3(n)
=\left\{(1+Mn^{-d})^3-1\right\}\alpha'_3(n)\\
&\le 3Mn^{-d}(1+Mn^{-d})^3\alpha'_3(n)
\le 3Mn^{-(d+1)}\{r^2\sin(\pi d)\}^3\tau_3.
\end{align*}
In the same way, 
$
\vert \alpha_{2k-1}(n)-\alpha'_{2k-1}(n)\vert
\le (2k-1)Mn^{-(d+1)}\{r^2\sin(\pi d)\}^{2k-1}\tau_{2k-1}
$ 
for $k=1,2,\dots$ and $n\ge N$.

Since $\alpha'(n)=d/(n-d)$ (see Hosking (1981, Theorem 1)), 
it follows that
\[
\left\vert \alpha(n)-\frac{d}{n-d}\right\vert
\le \sum\nolimits_{k=1}^{\infty}\vert \alpha_{2k-1}(n)-\alpha'_{2k-1}(n)\vert
\le n^{-(d+1)}M\sum\nolimits_{k=1}^{\infty}(2k-1)\tau_{2k-1}\{r^2\sin(\pi d)\}^{2k-1}.
\]
By (\ref{eq:4.1}), we have 
$\sum\nolimits_{k=1}^{\infty}(2k-1)\tau_{2k-1}\{r^2\sin(\pi d)\}^{2k-1}<\infty$, 
so that
\[
\alpha(n)=\frac{d}{n-d}+O\left(n^{-(d+1)}\right)
=\frac{d}{n}+O\left(n^{-(d+1)}\right),\qquad n\to\infty.
\]
Thus the theorem follows. \qed
\end{proof}








\end{document}